\DeclareMathOperator{\Mat}{Mat}
\DeclareMathOperator{\supp}{supp}
\NewDocumentCommand{\Z}{}{\mathbb{Z}}
\NewDocumentCommand{\R}{}{\mathbb{R}}
\NewDocumentCommand{\C}{}{\mathbb{C}}
\NewDocumentCommand{\abs}{m}{\left\lvert #1 \right\rvert}
\NewDocumentCommand{\norm}{m}{\left\lVert #1 \right\rVert}
\NewDocumentCommand{\set}{m}{
    \tl_set:Nn \l_tmpa_tl {#1}
    \tl_replace_once:Nnn \l_tmpa_tl {|} {\mathrel{}\middle\vert\mathrel{}}
    \left\{ \l_tmpa_tl \right\}
}
\theoremstyle{plain}
\newtheorem{theorem}{Theorem}[section]
\newtheorem{proposition}[theorem]{Proposition}
\newtheorem{lemma}[theorem]{Lemma}
\newtheorem{fact}[theorem]{Fact}
\theoremstyle{definition}
\newtheorem{definition}[theorem]{Definition}
\newtheorem{example}[theorem]{Example}
\title{On a Compact Generalization of Association Schemes}
\author{Akifumi Nakada}
\address{
    Mathematics Program,
    Graduate School of Advanced Science and Engineering,
    Hiroshima University,
    Higashi-Hiroshima 739-8526 JAPAN
}
\email{nakada-aki@hiroshima-u.ac.jp}
\date{\today}
\subjclass[2020]{Primary 05E30; Secondary 43A85}
\keywords{Association scheme, Homogeneous space}
\thanks{The author is supported by Grant-in-aid for Science Research, No. 23KJ1641}
\begin{document}

\begin{abstract}
    We introduce a notion of \emph{compact association schemes}, which serves as a compact analogue of classical (finite) association schemes.
    Our definition is formulated in a way that closely parallels the finite case, naturally admits a Bose--Mesner algebra, and includes the compact \emph{strong} continuous association schemes introduced by Voit [J.~Aust.~Math.~(2019)] within the framework of hypergroups.
    This approach provides a new perspective that bridges the theory of association schemes with harmonic analysis on compact homogeneous spaces.
\end{abstract}

\maketitle

\section{Introduction}

Harmonic analysis on homogeneous spaces of finite groups has been extensively developed through representation theory.
A remarkable combinatorial abstraction of such spaces is the notion of an association scheme, defined on a finite set by specifying a family of binary relations satisfying certain axioms:

\begin{definition}
    Let $X$ and $\mathcal{I}$ be finite sets, and let $R \colon X \times X \to \mathcal{I}$ be a surjective map.
    We call $R \coloneq (X,R)$ an \emph{association scheme} (AS) if it satisfies the following conditions:
    \begin{enumerate}[({AS}1), series = AS]
        \item There exists $i_0 \in \mathcal{I}$ such that $R^{-1}(i_0) = \Delta_X \coloneq \set{(x,x) | x \in X}$.
        \item For all $i,i',k \in \mathcal{I}$, there exists $p_{i,i'}^k \in \Z_{\ge 0}$ such that for all $(x,z) \in R^{-1}(k)$,
        \begin{equation}
            p_{i,i'}^k
            = \# \set{y \in X | R(x,y) = i, R(y,z) = i'}.
        \end{equation}
        \item For all $i \in \mathcal{I}$, there exists $i^\top \in \mathcal{I}$ such that
        \begin{equation}
            R^{-1}(i^\top)
            = R^{-1}(i)^\top
            \coloneq \set{(y,x) | (x,y) \in R^{-1}(i)}.
        \end{equation}
    \end{enumerate}
    Moreover, $(X,R)$ is \emph{commutative} [resp.\ \emph{symmetric}] if \ref{enu: AS commutativity} [resp.\ \ref{enu: AS symmetry}] holds.
    \begin{enumerate}[resume* = AS]
        \item \label{enu: AS commutativity} $p_{i,i'}^k = p_{i',i}^k$ for all $i,i',k \in \mathcal{I}$.
        \item \label{enu: AS symmetry} $i^\top = i$ for all $i \in \mathcal{I}$.
    \end{enumerate}
\end{definition}

This structure provides a purely combinatorial framework in which harmonic analysis can be formulated in algebraic terms.
Association schemes have since become a central object of study in algebraic combinatorics \cite{bannai2021algebraic, MR384310}.

On the other hand, on compact homogeneous spaces, harmonic analysis has been established (cf. \cite{MR2328043}), and combinatorial aspects such as designs and codes have been studied in \cite{MR485471, MR796472, MR2529619, MR1941981, MR2033722, MR679208, MR4097913, MR4331068, MR4780765}.
A natural question arising in this context is whether one can define a ``compact version'' of association schemes that encompasses both classical association schemes and compact homogeneous spaces, and provides a framework for developing harmonic analysis and combinatorial theory on them.

Several generalizations of association schemes that drop the finiteness assumption have been proposed, including the following examples:
countable discrete association schemes \cite{MR2184345};
totally disconnected association schemes \cite{MR3366838};
profinite association schemes \cite{MatsumotoOgawaOkuda2022};
Delsarte spaces, which are association schemes on metric spaces equipped with measures \cite{combcon};
and continuous association schemes \cite{Voit2019}.
Among these, the frameworks capable of treating rank-one compact symmetric spaces are Delsarte spaces and continuous association schemes.
In particular, only continuous association schemes are general enough to handle arbitrary compact homogeneous spaces.
However, the definition of continuous association schemes is formulated in terms of hypergroups, and its relationship to the classical notion of association schemes is not immediately transparent.

In this paper, we propose a definition of \emph{compact association schemes} from a point of view different from Voit’s.
Our definition is expressed in a form closely analogous to that of finite association schemes, admits a naturally defined Bose--Mesner algebra, and includes as a subclass the compact \emph{strong} continuous association schemes in the sense of \cite{Voit2019}.

We remark that harmonic analysis can be developed on the compact association schemes introduced here; this aspect will be discussed in a separate work.

\section{Compact association schemes}

In this section, we introduce a definition of compact association schemes.
Let $X$ be a non-empty compact Hausdorff space and let $\mu_X$ be a strictly positive Radon measure on $X$.
Note that $\mu_X$ is a regular finite Borel measure.

\begin{definition}
    Let $\mathcal{I}$ be a compact Hausdorff space and $R \colon X \times X \to \mathcal{I}$ be a quotient map.
    We call $R \coloneq (X,R)$ a \emph{compact association scheme} if it satisfies the following conditions:
    \begin{enumerate}[({CAS}1), series = CAS]
        \item \label{enu: CAS i_0} There exists $i_0 \in \mathcal{I}$ such that $R^{-1}(i_0) = \Delta_X \coloneq \set{(x,x) | x \in X}$.
        \item \label{enu: CAS intersection number} For all Borel sets $W, W' \subset \mathcal{I}$ and $k \in \mathcal{I}$, there exists $p_{W, W'}^k \in \R_{\ge 0}$ such that for all $(x,z) \in R^{-1}(k)$,
        \begin{equation}
            p_{W, W'}^k
            = \mu_X \ab(\set{y \in X | R(x,y) \in W,\ R(y,z) \in W'}).
        \end{equation}
        \item \label{enu: CAS transpose} For all $i \in \mathcal{I}$, there exists $i^\top \in \mathcal{I}$ such that
        \begin{equation}
            R^{-1}(i^\top)
            = R^{-1}(i)^\top
            \coloneq \set{(y,x) | (x,y) \in R^{-1}(i)}.
        \end{equation}
    \end{enumerate}
    Moreover, $(X,R)$ is \emph{commutative} [resp.\ \emph{symmetric}] if \ref{enu: CAS commutativity} [resp.\ \ref{enu: CAS symmetry}] holds.
    \begin{enumerate}[resume* = CAS]
        \item \label{enu: CAS commutativity} $p_{W, W'}^k = p_{W', W}^k$ for all Borel sets $W, W' \subset \mathcal{I}$ and $k \in \mathcal{I}$.
        \item \label{enu: CAS symmetry} $i^\top = i$ for all $i \in \mathcal{I}$.
    \end{enumerate}
\end{definition}

We can see from $p_{W, W'}^k = p_{W'^\top, W^\top}^{k^\top}$ that any symmetric compact association scheme is commutative.

We now present several examples of compact association schemes.
It is convenient to use \zcref{p: useful observation} (\ref{enu: easy to check CAS}) when verifying that each example indeed defines a compact association scheme.

\begin{example}[Association scheme]
    Any (finite) association scheme is a compact association scheme.
    Conversely, an association scheme is a compact association scheme $(X,R)$ such that $X$ is a finite discrete space and $\mu_X$ is the counting measure.
    Commutativity and symmetry are also defined consistently.
\end{example}

\begin{example}[Compact homogeneous space]
    Let $G$ be a compact Hausdorff topological group, $X$ be a non-empty compact Hausdorff homogeneous $G$-space and $\mu_X$ be the $G$-invariant probability Radon measure on $X$.
    Then the quotient $R \colon X \times X \to X \times X / \operatorname{diag} G$ of the diagonal $G$-action is a compact association scheme.
    Moreover, commutativity of compact homogeneous spaces, regarded as compact association schemes, is equivalent to the multiplicity-freeness of their regular representations.
\end{example}

\begin{example}[Delsarte space]\label{ex: Delsarte space}
    Let $(X,d)$ be a compact metric space equipped with a strictly positive Radon measure $\mu_X$, and assume that $X$ is a Delsarte space with respect to $\mu_X$ (cf. \cite[p.\ 52]{combcon}).
    Then the squared distance function $R \coloneq d^2 \colon X \times X \to \mathcal{I} \coloneq R(X \times X)$ is a symmetric compact association scheme.
\end{example}

Next, we define Bose--Mesner algebras corresponding to compact association schemes.
We begin by briefly recalling the notion of nets.
A non-empty set $(\mathcal{N}, \le)$ endowed with a binary relation that is reflexive and transitive, and such that any two elements admit a common upper bound, is called a directed set.
A map $a \colon \mathcal{N} \to Y$ from a directed set to a set is called a net, written $(a_N)_{N \in \mathcal{N}} \subset Y$.
When $Y$ is a normed space, the net $(a_N)_{N \in \mathcal{N}}$ is said to converge to $a \in Y$ if, for all $\varepsilon > 0$, there exists $N_0 \in \mathcal{N}$ such that $\norm{a - a_N} < \varepsilon$ for all $N \ge N_0$.
In this case we write $\lim_N a_N = a$.

We next describe several operations on the space $\Mat(X) \coloneq C(X \times X)$ of all $\C$-valued continuous functions on $X \times X$.
The space $\Mat(X)$ is equipped with matrix multiplication $\ast$, Hadamard multiplication $\circ$, transpose $-^\top$ and complex conjugate $\overline{-}$, where matrix multiplication $\ast$ is defined by
\begin{equation}
    A \ast B(x,z)
    \coloneq \int_y A(x,y) \cdot B(y,z) \d \mu_X
    \quad (A,B \in \Mat(X),\ x,z \in X).
\end{equation}
These operations are continuous with respect to the uniform norm $\norm{-}_\infty$.

We are now ready to define the Bose--Mesner algebra on $X$:

\begin{definition}
    Let $\mathfrak{A}$ be a unital C*-subalgebra of $(\Mat(X), \circ, \overline{-}, \norm{-}_\infty)$.
    We call $\mathfrak{A}$ a \emph{Bose--Mesner algebra} if it satisfies the following conditions:
    \begin{enumerate}[({BMA}1), series = BMA]
        \item \label{enu: BMA identity}
        \begin{enumerate}[ref=\theenumi(\alph*)]
            \item \label{enu: BMA I} There exists a net $(I_N)_{N \in \mathcal{N}}$ in $\mathfrak{A}$ such that for all $A \in \Mat(X)$, we have $\lim_N I_N \ast A = A = \lim_N A \ast I_N$, i.e., $(I_N)_{N \in \mathcal{N}}$ is an approximate identity of the Banach algebra $(\Mat(X), \ast, \norm{-}_\infty)$.
            \item \label{enu: BMA J} $\mathfrak{A} \ast J \subset \C J$, where $J$ is the constant function with value $1$.
        \end{enumerate}
        \item \label{enu: BMA matrix multiplication} $\mathfrak{A}$ is closed under matrix multiplication.
        \item \label{enu: BMA transpose} $\mathfrak{A}$ is closed under transpose.
    \end{enumerate}
    Moreover, $\mathfrak{A}$ is \emph{commutative} [resp.\ \emph{symmetric}] if \ref{enu: BMA commutativity} [resp.\ \ref{enu: BMA symmetry}] holds.
    \begin{enumerate}[resume* = BMA]
        \item \label{enu: BMA commutativity} Matrix multiplication on $\mathfrak{A}$ is commutative.
        \item \label{enu: BMA symmetry} $A^\top = A$ holds for all $A \in \mathfrak{A}$.
    \end{enumerate}
\end{definition}

Now, Gelfand duality, that is a duality of the category of compact Hausdorff spaces and the category of unital commutative C*-algebras, induces a correspondence between quotient maps $R$ of $X \times X$ and unital C*-subalgebras $\mathfrak{A}$ of $\Mat(X)$ (cf. \cite{MR1074574}).
Moreover, we prove that $R$ being a compact association scheme is equivalent to $\mathfrak{A}$ being a Bose--Mesner algebra:

\begin{theorem} \label{p: CH to C*-Alg}
    Let $\mathcal{I}$ be a compact Hausdorff space and $R \colon X \times X \to \mathcal{I}$ be a quotient map.
    Then the corresponding unital C*-subalgebra
    \begin{equation}
        \mathfrak{A}_R
        \coloneq \set{R^* f \coloneq f \circ R | f \in C(\mathcal{I})}
    \end{equation}
    of $\Mat(X)$ is a Bose--Mesner algebra if and only if $R$ is a compact association scheme.
    Moreover, $\mathfrak{A}_R$ is commutative [resp.\ symmetric] precisely when $R$ is commutative [resp.\ symmetric].
\end{theorem}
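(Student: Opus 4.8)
Throughout I would use that, $R$ being a quotient map, the pull-back $R^{*}\colon C(\mathcal{I})\to\Mat(X)$ is an isometric $*$-isomorphism onto $\mathfrak{A}_{R}$, so $\mathfrak{A}_{R}$ is precisely the set of $A\in\Mat(X)$ that are constant on every fibre $R^{-1}(i)$. I would match the Bose--Mesner axioms with the scheme axioms; the only subtle one is \ref{enu: BMA identity}$\iff$\ref{enu: CAS i_0}. The main device is the bounded linear averaging map
\[
    T\colon C(\mathcal{I}\times\mathcal{I})\to\Mat(X),\qquad TF(x,z)\coloneq\int_{y}F\bigl(R(x,y),R(y,z)\bigr)\d\mu_{X},
\]
which satisfies $T(f\otimes g)=(R^{*}f)\circ(R^{*}g)$ for $f,g\in C(\mathcal{I})$, where $(f\otimes g)(i,j)\coloneq f(i)g(j)$. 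Since the $f\otimes g$ span a dense subspace of $C(\mathcal{I}\times\mathcal{I})$ (Stone--Weierstrass), $T$ is continuous and $\mathfrak{A}_{R}$ is closed, \ref{enu: BMA matrix multiplication} is equivalent to: $TF\in\mathfrak{A}_{R}$, i.e.\ $TF$ is constant on the fibres of $R$, for every $F\in C(\mathcal{I}\times\mathcal{I})$. Writing $\nu_{x,z}$ for the Radon push-forward of $\mu_{X}$ along $y\mapsto(R(x,y),R(y,z))$, this says exactly that $\nu_{x,z}$ depends only on $k\coloneq R(x,z)$; calling the common measure $\nu_{k}$ and setting $p^{k}_{W,W'}\coloneq\nu_{k}(W\times W')$ gives \ref{enu: CAS intersection number}, and conversely \ref{enu: CAS intersection number} recovers this via the uniqueness in the Riesz representation theorem together with a standard approximation argument. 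Reading off, at the same time, that $\nu_{k}$ is invariant under the coordinate flip of $\mathcal{I}\times\mathcal{I}$ yields \ref{enu: BMA commutativity}$\iff$\ref{enu: CAS commutativity}.

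For the transpose, let $\sigma(x,y)\coloneq(y,x)$. If \ref{enu: CAS transpose} holds then $R\circ\sigma$ is constant on the fibres of $R$, hence descends (universal property of the quotient map $R$) to a continuous $\top\colon\mathcal{I}\to\mathcal{I}$ with $R\circ\sigma=\top\circ R$; it is an involution since $R^{-1}(i^{\top\top})=R^{-1}(i)$ and distinct points of $\mathcal{I}$ have distinct nonempty fibres. Then $(R^{*}f)^{\top}=R^{*}(f\circ\top)\in\mathfrak{A}_{R}$, which is \ref{enu: BMA transpose}. Conversely, closedness of $\mathfrak{A}_{R}$ under transpose makes $(x,z)\mapsto f(R(z,x))$ constant on the fibres of $R$ for every $f\in C(\mathcal{I})$, so, $C(\mathcal{I})$ separating points, $R(x,z)\mapsto R(z,x)$ is a well-defined involutive map $\top$ with $R^{-1}(i^{\top})=R^{-1}(i)^{\top}$, i.e.\ \ref{enu: CAS transpose}. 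Finally \ref{enu: BMA symmetry}$\iff$\ref{enu: CAS symmetry}, since $(R^{*}f)^{\top}=R^{*}f$ for all $f$ means $f\circ\top=f$ for all $f$, i.e.\ $\top=\mathrm{id}_{\mathcal{I}}$.

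It remains to prove \ref{enu: CAS i_0}$\iff$\ref{enu: BMA identity}, where the real work lies. Assume \ref{enu: CAS i_0} (and \ref{enu: CAS intersection number}): the marginal $\rho_{x}\coloneq(R(x,\cdot))_{*}\mu_{X}$ equals $W\mapsto p^{i_{0}}_{W,\mathcal{I}}$, hence is independent of $x$; write $\rho$ for it, note $i_{0}\in\supp\rho$ as $\mu_{X}$ is strictly positive, and observe that a Fubini computation on the Radon product $\mu_{X}\otimes\mu_{X}$ (invariant under $\sigma$) also gives $(R(\cdot,z))_{*}\mu_{X}=\rho$ for every $z$. Then $(R^{*}f)\circ J=\left(\int f\d\rho\right)J\in\C J$ gives \ref{enu: BMA J}; for \ref{enu: BMA I}, index a net by the open neighbourhoods $U$ of $i_{0}$ under reverse inclusion, pick $h_{U}\in C(\mathcal{I})$ with $0\le h_{U}$, $\supp h_{U}\subset U$ and $\int h_{U}\d\rho=1$, and set $I_{U}\coloneq R^{*}h_{U}$. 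Since $R^{-1}(i_{0})=\Delta_{X}$, the $R$-image of any closed set disjoint from $\Delta_{X}$ avoids $i_{0}$, so $h_{U}(R(x,y))\neq 0$ confines $(x,y)$ to an arbitrarily small neighbourhood of $\Delta_{X}$; combined with $\int_{y}h_{U}(R(x,y))\d\mu_{X}=\int_{y}h_{U}(R(y,z))\d\mu_{X}=1$ and the uniform continuity of $A\in\Mat(X)$, this forces $\norm{I_{U}\circ A-A}_{\infty}\to 0$ and $\norm{A\circ I_{U}-A}_{\infty}\to 0$.

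The reverse implication \ref{enu: BMA identity}$\implies$\ref{enu: CAS i_0} is the main obstacle: one must manufacture $i_{0}$ and the equality $R^{-1}(i_{0})=\Delta_{X}$ from an (unbounded) approximate identity. From \ref{enu: BMA J}, $(R^{*}f)\circ J\in\C J$ forces $\int f\d\rho_{x}$ to be independent of $x$, so $\rho_{x}\equiv\rho$ again. Writing $I_{N}=R^{*}h_{N}$ for the approximate identity and testing \ref{enu: BMA I} on $A(y,z)\coloneq\phi(y)$, $\phi\in C(X)$, one gets $\lambda^{x}_{N}\coloneq(h_{N}\circ R(x,\cdot))\,\mu_{X}\to\delta_{x}$ weak-$*$ for each $x$; pushing this forward along $R(x,\cdot)$ gives $h_{N}\rho=(R(x,\cdot))_{*}\lambda^{x}_{N}\to\delta_{R(x,x)}$, and as the left side is independent of $x$, the point $R(x,x)$ must be a single $i_{0}$. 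Now take $(a,b)\in R^{-1}(i_{0})$, so $(a,a),(a,b),(b,b)\in R^{-1}(i_{0})$; for any $A\in\mathfrak{A}_{R}$ the element $A\circ A^{\ast}$ (with $A^{\ast}\coloneq\overline{A}^{\top}$) lies in $\mathfrak{A}_{R}$, since $\mathfrak{A}_{R}$ is closed under $\overline{(\cdot)}$, $\top$ and $\circ$, so $A\circ A^{\ast}=R^{*}\phi$ for some $\phi\in C(\mathcal{I})$ and takes the common value $\phi(i_{0})\in\R$ at those three points, namely $\norm{A(a,\cdot)}_{L^{2}(\mu_{X})}^{2}$, $\langle A(a,\cdot),A(b,\cdot)\rangle_{L^{2}(\mu_{X})}$ and $\norm{A(b,\cdot)}_{L^{2}(\mu_{X})}^{2}$; hence $\norm{A(a,\cdot)-A(b,\cdot)}_{L^{2}(\mu_{X})}=0$, so $A(a,\cdot)=A(b,\cdot)$ by strict positivity and continuity, and therefore $R(a,\cdot)=R(b,\cdot)$ as $C(\mathcal{I})$ separates points. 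But then $\lambda^{a}_{N}=\lambda^{b}_{N}$, so $\delta_{a}=\delta_{b}$ and $a=b$, giving $R^{-1}(i_{0})=\Delta_{X}$. Apart from this extraction, the only delicate points are measure-theoretic: existence, flip-invariance and Fubini for the Radon product $\mu_{X}\otimes\mu_{X}$, the Riesz uniqueness step, and moving between the Borel formulation of \ref{enu: CAS intersection number} and the continuous-function formulation used through $T$.
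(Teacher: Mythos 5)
Your argument is correct and covers both directions as well as the commutativity and symmetry statements, but it is organized differently from the paper, so a comparison is worthwhile. The paper splits the theorem into two lemmas and uses the Gelfand correspondence: (CAS)$\Rightarrow$(BMA) is proved for $\mathfrak{A}_R$ (\zcref{p: CAS to BMA}), while (BMA)$\Rightarrow$(CAS) is proved for the spectrum $R_\mathfrak{A}$ of an \emph{abstract} Bose--Mesner algebra and then transported back to $R$ via $R_{\mathfrak{A}_R}\simeq R$; you instead work with $R$ and $\mathfrak{A}_R$ throughout, which suffices for this theorem (though, unlike the paper's route, it does not simultaneously yield \zcref{p: C*-Alg to CH}). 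The technical devices also differ. For (CAS2)$\Leftrightarrow$(BMA2) you use the averaging operator $T$, the pushforward measures $\nu_{x,z}$ and Riesz uniqueness, whereas the paper passes from (BMA2) to (CAS2) by $L^2$-approximating indicators by continuous functions, Cauchy--Schwarz, and the identity $R_*\mu_X\otimes\mu_X=\mu_X(X)R_{x*}\mu_X$ (which itself needs (BMA1(b))); your route avoids that identity and makes (CAS4)$\Leftrightarrow$(BMA4) immediate as flip-invariance of $\nu_k$. For (CAS1)$\Rightarrow$(BMA1) the paper symmetrizes the Urysohn kernels, taking $R^*h_N+R^*h_N^\top$ (so it uses (CAS3)) and invokes Arzel\`a--Ascoli, while you normalize the unsymmetrized $R^*h_U$ using equality of the two marginals and plain uniform continuity; one small repair: the pointwise statement $(R(\cdot,z))_*\mu_X=\rho$ for \emph{every} $z$ does not follow from Fubini and flip-invariance of $\mu_X\otimes\mu_X$ alone, which only control the average over $z$ --- you first need that this marginal is independent of $z$, which is exactly $W\mapsto p^{i_0}_{\mathcal{I},W}$ from (CAS1)+(CAS2), and only then does the Fubini computation identify the constant with $\rho$; since you assume (CAS2) here, this is a one-line fix. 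For (BMA1)$\Rightarrow$(CAS1), your extraction of $i_0$ via the weak-$*$ limit $h_N\rho\to\delta_{R(x,x)}$ and the Gram argument with $A\circ A^*$ constant on $R^{-1}(i_0)$ is a clean variant of the paper's computation with $(A\bullet I_N^\top)\circ J$ and $\int_y\abs{I_N(x_0,y)-I_N(x,y)}^2\d\mu_X=0$: the same positive-definiteness trick, applied to general $A\in\mathfrak{A}_R$ rather than to the approximate identity itself.
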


\begin{theorem} \label{p: C*-Alg to CH}
    Let $\mathfrak{A}$ be a unital C*-subalgebra of $\Mat(X)$.
    Then the corresponding quotient map
    \begin{align}
        R_\mathfrak{A} \colon X \times X &\to \mathcal{I}_\mathfrak{A} \coloneq \set{i \colon \mathfrak{A} \to \C | \text{$i$ is a $\circ$-algebraic homomorphism}} \\
        (x,y) &\mapsto (A \mapsto A(x,y))
    \end{align}
    is a compact association scheme if and only if $\mathfrak{A}$ is a Bose--Mesner algebra.
    Moreover, $R_\mathfrak{A}$ is commutative [resp.\ symmetric] precisely when $\mathfrak{A}$ is commutative [resp.\ symmetric].
\end{theorem}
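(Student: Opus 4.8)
The plan is to deduce the theorem from \zcref{p: CH to C*-Alg} through the Gelfand-duality correspondence recorded just before it. Concretely, I would establish two auxiliary facts: that $R_{\mathfrak{A}}$ is a quotient map onto the compact Hausdorff space $\mathcal{I}_{\mathfrak{A}}$, and that $\mathfrak{A}_{R_{\mathfrak{A}}} = \mathfrak{A}$ as unital C*-subalgebras of $\Mat(X)$. Granting these, I would apply \zcref{p: CH to C*-Alg} to the quotient map $R_{\mathfrak{A}}$ and rewrite $\mathfrak{A}_{R_{\mathfrak{A}}}$ as $\mathfrak{A}$; this yields, in one stroke, the equivalence of ``$\mathfrak{A}$ is a Bose--Mesner algebra'' with ``$R_{\mathfrak{A}}$ is a compact association scheme'', together with the refinements to commutativity and to symmetry.

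First I would check that $R_{\mathfrak{A}}$ is a well-defined continuous surjection onto $\mathcal{I}_{\mathfrak{A}}$. For fixed $x, z \in X$ the evaluation $A \mapsto A(x,z)$ is unital (since $J$ is the $\bullet$-identity and $J(x,z) = 1$), multiplicative for $\bullet$ (since $(A \bullet B)(x,z) = A(x,z)\,B(x,z)$), and compatible with $\overline{-}$ (since $\overline{A}(x,z) = \overline{A(x,z)}$); hence it is a point of $\mathcal{I}_{\mathfrak{A}}$, and $R_{\mathfrak{A}}$ maps into $\mathcal{I}_{\mathfrak{A}}$. Continuity of $R_{\mathfrak{A}}$ holds because $\mathcal{I}_{\mathfrak{A}}$ carries the weak-$*$ topology and $(x,z) \mapsto A(x,z)$ is continuous for every $A \in \mathfrak{A}$. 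For surjectivity, note that the image $R_{\mathfrak{A}}(X \times X)$ is compact; if it were proper, Urysohn's lemma would furnish a nonzero $f \in C(\mathcal{I}_{\mathfrak{A}})$ vanishing on it, and writing $f = \hat{A}$ via the Gelfand transform would force $A(x,z) = f(R_{\mathfrak{A}}(x,z)) = 0$ for all $(x,z)$, hence $f = \hat{A} = 0$, a contradiction. Being a continuous surjection from a compact space onto a Hausdorff space, $R_{\mathfrak{A}}$ is closed and therefore a quotient map. (This step is nothing but the correspondence invoked before the statement, so in the final write-up it may simply be quoted.)

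Next I would verify $\mathfrak{A}_{R_{\mathfrak{A}}} = \mathfrak{A}$. By Gelfand--Naimark the Gelfand transform $A \mapsto \hat{A}$ is an isometric $*$-isomorphism of $(\mathfrak{A}, \bullet, \overline{-}, \norm{-}_\infty)$ onto $C(\mathcal{I}_{\mathfrak{A}})$, so every $f \in C(\mathcal{I}_{\mathfrak{A}})$ equals $\hat{A}$ for a unique $A \in \mathfrak{A}$, and then
\begin{equation}
    R_{\mathfrak{A}}^{*} \hat{A}(x,z) = \hat{A}(R_{\mathfrak{A}}(x,z)) = R_{\mathfrak{A}}(x,z)(A) = A(x,z),
\end{equation}
so $R_{\mathfrak{A}}^{*} \hat{A} = A$. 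Letting $A$ range over $\mathfrak{A}$ gives $\mathfrak{A}_{R_{\mathfrak{A}}} = \set{R_{\mathfrak{A}}^{*} f | f \in C(\mathcal{I}_{\mathfrak{A}})} = \mathfrak{A}$.

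Finally, \zcref{p: CH to C*-Alg}, applied with $\mathcal{I} = \mathcal{I}_{\mathfrak{A}}$ and $R = R_{\mathfrak{A}}$, states that $\mathfrak{A}_{R_{\mathfrak{A}}}$ is a Bose--Mesner algebra if and only if $R_{\mathfrak{A}}$ is a compact association scheme, with the same equivalences holding for commutativity and for symmetry; substituting $\mathfrak{A}_{R_{\mathfrak{A}}} = \mathfrak{A}$ completes the proof. I do not anticipate a genuine obstacle here: essentially all the substance is carried by \zcref{p: CH to C*-Alg}, and the only points demanding care are the two preparatory facts above --- chiefly that the spectrum is formed with respect to the Hadamard C*-structure, not matrix multiplication, and that $R_{\mathfrak{A}}$ is a bona fide quotient map, so that \zcref{p: CH to C*-Alg} is literally applicable.
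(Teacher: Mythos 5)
Your deduction is correct, but it takes a different route from the paper, and it is worth being clear about what it does and does not contain. The paper does not obtain \zcref{p: C*-Alg to CH} from \zcref{p: CH to C*-Alg}: it proves two lemmas --- one showing that a compact association scheme $R$ yields a Bose--Mesner algebra $\mathfrak{A}_R$, and a second, analytically substantial one showing that a Bose--Mesner algebra $\mathfrak{A}$ yields a compact association scheme $R_{\mathfrak{A}}$ --- and then derives both theorems at once from these lemmas together with the Gelfand-duality identifications $R_{\mathfrak{A}_R} \simeq R$ and $\mathfrak{A}_{R_{\mathfrak{A}}} = \mathfrak{A}$. In particular, the hard direction of the theorem you were asked to prove (that \ref{enu: BMA identity}--\ref{enu: BMA transpose} for $\mathfrak{A}$ force \ref{enu: CAS i_0}--\ref{enu: CAS transpose} for $R_{\mathfrak{A}}$) is exactly where the paper's work lies: $R_{\mathfrak{A}}^{-1}(i_0) = \Delta_X$ is proved via the approximate identity and an $L^2$ computation forcing $I_N(x_0,-) = I_N(x,-)$, and the intersection numbers are handled via the measure identity $R_{\mathfrak{A} *} \mu_X \otimes \mu_X = \mu_X(X) R_{\mathfrak{A} x *} \mu_X$ together with a Cauchy--Schwarz/$L^2$-approximation argument showing $R_{\mathfrak{A}}^* \chi_{W_1} \circ R_{\mathfrak{A}}^* \chi_{W_2} \in \mathfrak{A}$. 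Your proposal outsources all of this to \zcref{p: CH to C*-Alg} applied to $R = R_{\mathfrak{A}}$. That is legitimate within the paper --- \zcref{p: CH to C*-Alg} is proved from the two lemmas without any reference to \zcref{p: C*-Alg to CH}, so no circularity arises --- and your verification of the two duality facts (that $R_{\mathfrak{A}}$ is a continuous, hence closed, surjection onto the Gelfand spectrum of $(\mathfrak{A}, \bullet)$ and therefore a quotient map, and that $R_{\mathfrak{A}}^* \hat{A} = A$ gives $\mathfrak{A}_{R_{\mathfrak{A}}} = \mathfrak{A}$) is accurate and makes explicit what the paper leaves to its citation of Gelfand duality. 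What your route buys is a clean demonstration that the two theorems are interderivable once this correspondence is in place, which is essentially how the paper's short final proof is organized; what it loses is all of the analytic content, which in the paper constitutes the actual proof of the forward implication, so as a standalone argument your write-up establishes the statement only modulo the companion theorem you cite.
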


These results are analogous to those in the finite case of association schemes.
To prove these two theorems, we will prove two lemmas.

\begin{lemma} \label{p: CAS to BMA}
    Let $R \colon X \times X \to \mathcal{I}$ be a compact association scheme.
    Then $\mathfrak{A}_R$ is a Bose--Mesner algebra.
    Moreover, if $R$ is commutative [resp.\ symmetric], then $\mathfrak{A}_R$ is also commutative [resp.\ symmetric].
\end{lemma}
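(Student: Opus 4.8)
The plan is to check conditions \ref{enu: BMA identity}--\ref{enu: BMA transpose} in turn. The pullback $R^{*}\colon C(\mathcal{I})\to\Mat(X)$, $f\mapsto f\circ R$, is a unital $*$-homomorphism for the Hadamard structure, and it is isometric since the quotient map $R$ is surjective; hence $\mathfrak{A}_{R}=R^{*}\bigl(C(\mathcal{I})\bigr)$ is automatically a norm-closed unital C*-subalgebra of $(\Mat(X),\bullet,\overline{-},\norm{-}_{\infty})$, and only the matrix-multiplication structure requires work. The one device used throughout is the universal property of the quotient map $R$: a function on $\mathcal{I}$ is continuous as soon as its pullback along $R$ is. I would first extract from \ref{enu: CAS transpose} a well-defined involution $\top\colon\mathcal{I}\to\mathcal{I}$ with $R(y,x)=R(x,y)^{\top}$ for all $x,y\in X$ (well-definedness uses surjectivity of $R$), and note that $\top$ is \emph{continuous}, since $(x,y)\mapsto R(y,x)$ equals $\top\circ R$ and is continuous. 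Then $(R^{*}f)^{\top}=R^{*}(f\circ\top)\in\mathfrak{A}_{R}$, which is \ref{enu: BMA transpose}; and if $R$ is symmetric then $\top=\operatorname{id}_{\mathcal{I}}$, so each element of $\mathfrak{A}_{R}$ equals its own transpose, giving \ref{enu: BMA symmetry}.

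Next I would treat \ref{enu: BMA matrix multiplication}, \ref{enu: BMA commutativity} and \ref{enu: BMA J} together; this is where \ref{enu: CAS intersection number} and \ref{enu: CAS i_0} enter. A direct computation gives $\bigl((R^{*}f)\circ(R^{*}g)\bigr)(x,z)=\int_{y}f(R(x,y))\,g(R(y,z))\d\mu_{X}(y)$, and the crux is that this depends on $(x,z)$ only through $k\coloneq R(x,z)$. For indicators $f=\mathbf{1}_{W}$, $g=\mathbf{1}_{W'}$ ($W,W'\subset\mathcal{I}$ Borel) it equals $p^{k}_{W,W'}$ by \ref{enu: CAS intersection number}; since finite linear combinations of such indicators are uniformly dense in $C(\mathcal{I})$ and $(f,g)\mapsto\bigl[(x,z)\mapsto\int_{y}f(R(x,y))g(R(y,z))\d\mu_{X}\bigr]$ is bilinear and bounded for $\norm{-}_{\infty}$, the property of depending on $(x,z)$ only through $R(x,z)$ carries over from indicator pairs to all $f,g\in C(\mathcal{I})$. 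This gives $\phi_{f,g}\colon\mathcal{I}\to\C$ with $\phi_{f,g}\circ R=(R^{*}f)\circ(R^{*}g)\in\Mat(X)$, whence $\phi_{f,g}\in C(\mathcal{I})$ by the quotient property and $(R^{*}f)\circ(R^{*}g)=R^{*}\phi_{f,g}\in\mathfrak{A}_{R}$, which is \ref{enu: BMA matrix multiplication}. If $R$ is commutative, that is, $p^{k}_{W,W'}=p^{k}_{W',W}$, the same density argument yields $\phi_{f,g}=\phi_{g,f}$, hence \ref{enu: BMA commutativity}. Finally, taking $g=\mathbf{1}_{\mathcal{I}}$ (so $R^{*}g=J$) the integrand loses its $z$-dependence, so $\phi_{f,\mathbf{1}_{\mathcal{I}}}(R(x,z))=\int_{y}f(R(x,y))\d\mu_{X}(y)$ for all $x,z$; by \ref{enu: CAS i_0} (take $z=x$) and surjectivity of $R$ this is the constant $\int_{\mathcal{I}}f\d\nu$, where $\nu\coloneq\bigl(R(x,\cdot)\bigr)_{*}\mu_{X}$ is independent of $x$ (by \ref{enu: CAS intersection number} with $W'=\mathcal{I}$). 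Thus $(R^{*}f)\circ J=\bigl(\int_{\mathcal{I}}f\d\nu\bigr)J\in\C J$, which is \ref{enu: BMA J}.

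The real work is the approximate identity \ref{enu: BMA I}. I would index a net by the open neighborhoods $N\ni i_{0}$ in $\mathcal{I}$, directed by reverse inclusion. For each such $N$ pick, via Urysohn's lemma, $h_{N}\in C(\mathcal{I})$ with $0\le h_{N}\le 1$, $h_{N}(i_{0})=1$ and $\supp h_{N}\subset N$, and set $g_{N}\coloneq h_{N}\cdot(h_{N}\circ\top)$; then $g_{N}\ge 0$, $g_{N}(i_{0})=1$, $\supp g_{N}\subset N$, and, crucially, $g_{N}$ is $\top$-invariant. Put $c_{N}\coloneq\int_{\mathcal{I}}g_{N}\d\nu$; since $g_{N}\circ R$ is continuous, nonnegative and equal to $1$ on $\Delta_{X}$ while $\mu_{X}$ is strictly positive, $c_{N}>0$, so $I_{N}\coloneq c_{N}^{-1}R^{*}g_{N}\in\mathfrak{A}_{R}$ is well-defined, satisfies $I_{N}^{\top}=I_{N}$, and $\int_{y}I_{N}(x,y)\d\mu_{X}(y)=1=\int_{y}I_{N}(y,z)\d\mu_{X}(y)$ for all $x,z$ (the second equality uses $\top$-invariance of $g_{N}$, the identity $R(y,z)^{\top}=R(z,y)$, and the $x$-independence of $\nu$). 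Now fix $A\in\Mat(X)$ and $\varepsilon>0$: since $(x,y,z)\mapsto A(x,z)-A(y,z)$ is continuous and vanishes on the compact set $\set{(x,x,z)|x,z\in X}$, the set of $(x,y,z)$ with $\abs{A(x,z)-A(y,z)}\ge\varepsilon$ is compact, and its image under $(x,y,z)\mapsto(x,y)$ is a compact subset of $X\times X$ disjoint from $\Delta_{X}$; picking an open $V\supset\Delta_{X}$ that avoids it gives $\abs{A(x,z)-A(y,z)}<\varepsilon$ for all $z$ whenever $(x,y)\in V$. As $\Delta_{X}=R^{-1}(i_{0})$ and $R\bigl((X\times X)\setminus V\bigr)$ is compact and omits $i_{0}$, there is an open $N\ni i_{0}$ with $R^{-1}(N)\subset V$; then for every $N'\subset N$ the integrand of $(I_{N'}\circ A-A)(x,z)=\int_{y}I_{N'}(x,y)\bigl(A(y,z)-A(x,z)\bigr)\d\mu_{X}(y)$ vanishes unless $(x,y)\in R^{-1}(N')\subset V$, where $\abs{A(y,z)-A(x,z)}<\varepsilon$, so $\norm{I_{N'}\circ A-A}_{\infty}\le\varepsilon$. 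Hence $I_{N}\circ A\to A$, and then $A\circ I_{N}=(I_{N}\circ A^{\top})^{\top}\to A$ since $I_{N}^{\top}=I_{N}$ and transpose is a $\norm{-}_{\infty}$-isometry. I expect this last step to be the only genuinely delicate part: matching the left and right $\circ$-normalizations of $I_{N}$, which is precisely what the $\top$-invariant choice of $g_{N}$ secures, and producing the single neighborhood $V$ uniformly in $x$ by compactness. The continuity of $\top$ is the short but easily overlooked point underlying \ref{enu: BMA transpose} and the symmetric case.
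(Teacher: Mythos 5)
Your proof is correct and follows essentially the same route as the paper's: closure under matrix multiplication (and commutativity) is obtained from the intersection numbers on indicator pairs plus uniform approximation by simple functions and the quotient property, the condition on $J$ comes from this together with (CAS1), and the approximate identity is a net of Urysohn bumps $h_N$ on shrinking neighborhoods $N$ of $i_0$, with the decisive neighborhood of $i_0$ produced from closedness of $R$ applied to the complement of a neighborhood of $\Delta_X$. The only differences are cosmetic: you symmetrize multiplicatively via $g_N = h_N\cdot(h_N\circ\top)$ normalized by its $\nu$-integral where the paper uses $\tfrac{1}{2}(R^*h_N + R^*h_N^\top)$ normalized over $\mu_X\otimes\mu_X$, you get the uniform near-diagonal estimate by a direct compactness argument on $X\times X\times X$ where the paper invokes Arzel\`a--Ascoli, and you deduce $A\circ I_N\to A$ by transposing (which, like your $g_N(i_0)=1$, tacitly uses the easily checked fact $i_0^\top=i_0$) where the paper repeats the argument symmetrically.
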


\begin{proof}
    \ref{enu: BMA transpose}:
    It is clear from \ref{enu: CAS transpose}.

    \ref{enu: BMA matrix multiplication}:
    From \ref{enu: CAS intersection number},
    \begin{equation}
        \int_y R^* \chi_W(x,y) \cdot R^* \chi_{W'}(y,z) \d \mu_X
        = p_{W,W'}^{R(x,z)}
    \end{equation}
    holds for all Borel sets $W,W' \subset \mathcal{I}$ and $x,z \in X$, where $\chi_W$ denotes the characteristic function of $W$.
    Hence, $R^* f \ast R^* g(x,z)$ is a constant independent of $(x,z) \in R^{-1}(k)$ for all $f,g \in C(\mathcal{I})$ and $k \in \mathcal{I}$, because it can be written as
    \begin{equation}
        R^* f \ast R^* g(x,z)
        = \lim_{n \to \infty} \int_y R^* f_n(x,y) \cdot R^* g_n(y,z) \d \mu_X,
    \end{equation}
    considering uniform approximations $f_n \to f,\ g_n \to g \ (n \to \infty)$ by simple functions.
    Thus, $\mathfrak{A}_R$ is closed under matrix multiplication.

    \ref{enu: BMA J}:
    From \ref{enu: BMA matrix multiplication} and \ref{enu: CAS i_0}, $A \ast J$ is a constant function for all $A \in \mathfrak{A}_R$.

    \ref{enu: BMA I}:
    We consider the neighbourhood system $\mathcal{N}(i_0)$ of $i_0 \in \mathcal{I}$.
    Since $\mathcal{I}$ is compact Hausdorff,
    for each $N \in \mathcal{N}(i_0)$, there exists a continuous function $h_N \colon \mathcal{I} \to [0,1]$ such that $h_N(i_0) = 1,\ h_N|_{N^c} = 0$.
    Then we put
    \begin{equation}
        I_N
        \coloneq \frac{\mu_X(X)}{2 \int R^* h_N \d \mu_X \otimes \mu_X} \cdot (R^* h_N + R^* h_N^\top),
    \end{equation}
    where $\mu_X \otimes \mu_X$ denotes the product measure.
    Note that $I_N$ is in $\mathfrak{A}_R$ by \ref{enu: BMA transpose}, and $\int_y I_N(x,y) \d \mu_X = 1$ holds for all $x \in X$ by \ref{enu: BMA J}.

    Below, we show that the net $(I_N)_{N \in (\mathcal{N}(i_0), \supset)} \subset \mathfrak{A}_R$ is an approximate identity.
    Fix $A \in \Mat(X)$ and $\varepsilon > 0$.
    Because $\set{A(-,z) | z \in X} \subset C(X)$ is compact, by the Arzel\`a--Ascoli theorem, $\set{A(-,z) | z \in X}$ is uniformly equicontinuous, that is, there exists a neighborhood $U$ of $\Delta_X$ such that
    \begin{equation}
        \sup_{(x,y) \in U,\ z \in X} \abs{A(x,z) - A(y,z)}
        < \varepsilon
    \end{equation}
    holds.
    Now, we have $R(U^c)^c \cap R(U^{c \top})^c \in \mathcal{N}(i_0)$ since $R$ is closed and \ref{enu: CAS i_0} holds, and for all $N \in \mathcal{N}(i_0)$ with $N \subset R(U^c)^c \cap R(U^{c \top})^c$ and all $x,z \in X$,
    \begin{equation}
        \abs{(A - I_N \ast A)(x,z)}
        \le \int_y I_N(x,y) \abs{A(x,z) - A(y,z)} \d \mu_X
        < \varepsilon.
    \end{equation}
    Thus, $I_N \ast A$ converges uniformly to $A$, and so does $A \ast I_N$.
    Therefore, $\mathfrak{A}_R$ is a Bose--Mesner algebra.

    \ref{enu: BMA commutativity}:
    It can be shown by the same approach as \ref{enu: BMA matrix multiplication} from \ref{enu: CAS commutativity}.

    \ref{enu: BMA symmetry}:
    It is easy to see from \ref{enu: CAS symmetry}.
    \qed
\end{proof}

\begin{lemma}
    Let $\mathfrak{A} \subset \Mat(X)$ be a Bose--Mesner algebra.
    Then $R_\mathfrak{A}$ is a compact association scheme.
    Moreover, if $\mathfrak{A}$ is commutative [resp.\ symmetric], then $R_\mathfrak{A}$ is also commutative [resp.\ symmetric].
\end{lemma}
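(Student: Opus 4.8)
The plan is to check \ref{enu: CAS i_0}--\ref{enu: CAS transpose} for $R \coloneq R_{\mathfrak{A}}$, using Gelfand duality throughout. Since $(\mathfrak{A}, \bullet, \overline{-}, \norm{-}_{\infty})$ is a unital commutative C*-algebra, its spectrum $\mathcal{I}_{\mathfrak{A}}$ is compact Hausdorff, $R$ is a quotient map, and the pullback $R^{*}$ identifies $C(\mathcal{I}_{\mathfrak{A}})$ with $\mathfrak{A}$; in particular $\mathfrak{A} = \mathfrak{A}_{R}$, so every $A \in \mathfrak{A}$ is constant on the fibres of $R$. None of this uses the Bose--Mesner axioms --- it is the correspondence already recorded above. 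The quick axiom is \ref{enu: CAS transpose}: by \ref{enu: BMA transpose} the map $A \mapsto A^{\top}$ is a $\ast$-automorphism of the C*-algebra $\mathfrak{A}$, hence induces a homeomorphism $\tau$ of $\mathcal{I}_{\mathfrak{A}}$ with $\tau(i)(A) = i(A^{\top})$; setting $i^{\top} \coloneq \tau(i)$, one has $(x,y) \in R^{-1}(i^{\top})$ iff $A^{\top}(y,x) = i(A^{\top})$ for all $A \in \mathfrak{A}$, i.e.\ iff $(y,x) \in R^{-1}(i)$, which is \ref{enu: CAS transpose}. If \ref{enu: BMA symmetry} holds then $\tau = \mathrm{id}$, giving \ref{enu: CAS symmetry}.

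Next, \ref{enu: CAS intersection number}, which reverses the argument for \ref{enu: BMA matrix multiplication} in Lemma~\ref{p: CAS to BMA}. Fix $(x,z) \in X \times X$ and let $\nu_{x,z}$ be the pushforward of $\mu_{X}$ along $y \mapsto (R(x,y), R(y,z)) \in \mathcal{I}_{\mathfrak{A}}^{2}$, a finite Radon measure of total mass $\mu_{X}(X)$. Since $\mu_{X}(\set{y \in X | R(x,y) \in W,\ R(y,z) \in W'}) = \nu_{x,z}(W \times W')$, it suffices that $\nu_{x,z}$ depend only on $k \coloneq R(x,z)$. A Radon measure on a compact Hausdorff space is determined by its integrals against continuous functions, and by Stone--Weierstrass the functions $(i,j) \mapsto f(i) g(j)$ ($f, g \in C(\mathcal{I}_{\mathfrak{A}})$) span a dense subalgebra of $C(\mathcal{I}_{\mathfrak{A}}^{2})$; so it is enough to note that $\int f(i) g(j) \d \nu_{x,z} = \int_{y} R^{*}f(x,y) \cdot R^{*}g(y,z) \d \mu_{X} = (R^{*}f \circ R^{*}g)(x,z)$ and that $R^{*}f \circ R^{*}g \in \mathfrak{A} = \mathfrak{A}_{R}$ by \ref{enu: BMA matrix multiplication}, hence is constant on $R^{-1}(k)$. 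One then sets $p_{W,W'}^{k} \coloneq \nu_{k}(W \times W')$. If \ref{enu: BMA commutativity} holds, the same identity makes each $\nu_{k}$ invariant under swapping the two coordinates, which is \ref{enu: CAS commutativity}.

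The substance is \ref{enu: CAS i_0}. To produce $i_{0}$ I would first show that $A|_{\Delta_{X}}$ is constant for every $A \in \mathfrak{A}$: the set of such $A$ is a $\norm{-}_{\infty}$-closed subspace of $\mathfrak{A}$, and it contains every $B \circ B^{*}$ with $B \in \mathfrak{A}$ (where $B^{*} \coloneq \overline{B}^{\top} \in \mathfrak{A}$), because $(B \circ B^{*})(x,x) = \int_{y} \abs{B(x,y)}^{2} \d \mu_{X}$ is independent of $x$ by \ref{enu: BMA J} applied to $B \bullet \overline{B} \in \mathfrak{A}$. By the polarization identity this subspace then contains the linear span of $\mathfrak{A} \circ \mathfrak{A}$, which is $\norm{-}_{\infty}$-dense in $\mathfrak{A}$ since $I_{N} \circ A \to A$ for every $A \in \mathfrak{A}$ by \ref{enu: BMA I}; hence it is all of $\mathfrak{A}$. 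Therefore $i_{0} \colon A \mapsto A(x,x)$ is a well-defined $\bullet$-algebra homomorphism $\mathfrak{A} \to \C$, i.e.\ a point of $\mathcal{I}_{\mathfrak{A}}$, and $\Delta_{X} \subseteq R^{-1}(i_{0})$.

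The hard part is the reverse inclusion $R^{-1}(i_{0}) \subseteq \Delta_{X}$ --- the one step forcing the scheme not to collapse too much, and the only place where the approximate identity of \ref{enu: BMA I} does something with no finite-dimensional analogue. Suppose $R(a,b) = i_{0}$. For $B \in \mathfrak{A}$ the element $B^{*} \circ B \in \mathfrak{A}$ is constant on $R^{-1}(i_{0}) \ni (a,a), (a,b), (b,b)$, and $(B^{*} \circ B)(x,z) = \int_{y} \overline{B(y,x)} B(y,z) \d \mu_{X}$, so $\int_{y} \overline{B(y,a)} B(y,b) \d \mu_{X} = \int_{y} \abs{B(y,a)}^{2} \d \mu_{X} = \int_{y} \abs{B(y,b)}^{2} \d \mu_{X}$; since $\int_{y} \abs{B(y,a) - B(y,b)}^{2} \d \mu_{X}$ then expands to $0$, we get $B(\cdot, a) = B(\cdot, b)$ in $L^{2}(\mu_{X})$ and, $B$ being continuous and $\mu_{X}$ strictly positive, pointwise --- for every $B \in \mathfrak{A}$. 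Taking $B = I_{N}$ and using $A \circ I_{N} \to A$ with $A(x,w) \coloneq g(w)$, one gets $g(a) = \lim_{N} \int_{y} g(y) I_{N}(y,a) \d \mu_{X} = \lim_{N} \int_{y} g(y) I_{N}(y,b) \d \mu_{X} = g(b)$ for every $g \in C(X)$, whence $a = b$ since $X$ is Hausdorff. I expect this Cauchy--Schwarz rigidity step --- extracting a pointwise identity of kernels from constancy on the fibre $R^{-1}(i_{0})$ and then feeding it back through the approximate identity --- to be the main obstacle; the remaining axioms are bookkeeping with Gelfand duality and the reversal of the arguments in Lemma~\ref{p: CAS to BMA}.
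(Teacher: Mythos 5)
Your proposal is correct, and on \ref{enu: CAS i_0}, \ref{enu: CAS transpose} and \ref{enu: CAS symmetry} it is essentially the paper's own argument: the paper also shows that $A \mapsto A(x,x)$ does not depend on $x$ by expressing the diagonal value as a limit of quantities made constant by \ref{enu: BMA J} (it uses the identity $A \circ I_N(x,x) = (A \bullet I_N^\top) \circ J(x,x)$ directly, where you route the same ingredients through $B \circ B^*$, polarization, and density of the span of $\mathfrak{A} \circ \mathfrak{A}$ --- a slightly longer path to the same fact), and it proves $R_\mathfrak{A}^{-1}(i_0) \subset \Delta_X$ by exactly your Cauchy--Schwarz rigidity step, namely $\int_y \abs{I_N(x_0,y) - I_N(x,y)}^2 \d \mu_X = 0$ followed by feeding the resulting equality of kernels back through the approximate identity. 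Where you genuinely diverge is \ref{enu: CAS intersection number} and \ref{enu: CAS commutativity}: the paper fixes Borel sets $W_1, W_2$, approximates their indicators in $L^2$ of the pushforward measure by continuous functions, and uses Cauchy--Schwarz together with the identity $R_{\mathfrak{A} *} \mu_X \otimes \mu_X = \mu_X(X) R_{\mathfrak{A} x *} \mu_X$ (itself extracted from \ref{enu: BMA J}) to conclude that $R_\mathfrak{A}^* \chi_{W_1} \circ R_\mathfrak{A}^* \chi_{W_2}$ is a uniform limit of elements of $\mathfrak{A}$, hence lies in $\mathfrak{A} = \mathfrak{A}_{R_\mathfrak{A}}$ by completeness and is constant on fibres; you instead push $\mu_X$ forward to $\mathcal{I}_\mathfrak{A}^2$ along $y \mapsto (R_\mathfrak{A}(x,y), R_\mathfrak{A}(y,z))$ and show this Radon measure depends only on $R_\mathfrak{A}(x,z)$ by testing against $f \otimes g$ via Stone--Weierstrass and Riesz, with \ref{enu: BMA matrix multiplication} entering only through fibre-constancy of $R_\mathfrak{A}^* f \circ R_\mathfrak{A}^* g$, and commutativity handled by swap-invariance of the same measures. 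Your route avoids the $L^2$-approximation, the measure identity and the completeness argument, so it is arguably cleaner for the bare statement; the paper's route yields as by-products the membership $R_\mathfrak{A}^* \chi_{W_1} \circ R_\mathfrak{A}^* \chi_{W_2} \in \mathfrak{A}$ and the measure identity, both of which the paper records and reuses after the theorems. Both are complete proofs of the lemma.
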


\begin{proof}
    \ref{enu: CAS transpose}:
    \ref{enu: BMA transpose} ensures that $i^\top \colon \mathfrak{A} \to \C;\ A \mapsto i(A^\top)$ is well-defined for every $i \in \mathcal{I}_\mathfrak{A}$.
    It is easy to check that $i^\top$ satisfies the required conditions.

    \ref{enu: CAS i_0}:
    Fix $x_0 \in X$ and put $i_0 \coloneq R_\mathfrak{A}(x_0,x_0) \in \mathcal{I}_\mathfrak{A}$.
    Let $(I_N)_{N \in \mathcal{N}} \subset \mathfrak{A}$ be an approximate identity obtained from \ref{enu: BMA I}.
    Then for all $A \in \mathfrak{A}$, we have
    \begin{equation}
        A(x_0,x_0)
        = \lim_N A \ast I_N(x_0,x_0)
        = \lim_N ((A \circ I_N^\top) \ast J(x_0,x_0)).
    \end{equation}
    Hence, by \ref{enu: BMA J} and \ref{enu: BMA transpose}, $i_0$ is independent of the choice of $x_0$.

    Next, take $x \in X$ with $R_\mathfrak{A}(x_0,x) = i_0$.
    For all $N \in \mathcal{N}$,
    \begin{align}
        \MoveEqLeft
        \int_y \abs{I_N(x_0,y) - I_N(x,y)}^2 \d \mu_X
        \\&= I_N \ast \overline{I_N^\top}(x_0,x_0) + I_N \ast \overline{I_N^\top}(x,x) - I_N \ast \overline{I_N^\top}(x_0,x) - \overline{I_N} \ast I_N^\top(x_0,x)
        \\&= 0
    \end{align}
    follows from \ref{enu: BMA matrix multiplication} and \ref{enu: BMA transpose}; thus, we have $I_N(x_0,-) = I_N(x,-)$.
    Since $(I_N)_{N \in \mathcal{N}}$ is an approximate identity, we have $A(x_0,x_0) = A(x,x_0)$ for all $A \in \Mat(X)$, and therefore we get $x_0 = x$.
    Due to the above, $R_\mathfrak{A}^{-1}(i_0) = \Delta_X$ is satisfied.

    \ref{enu: CAS intersection number}:
    We consider the pushforward measures $R_{\mathfrak{A} *} \mu_X \otimes \mu_X$ and $R_{\mathfrak{A} x *} \mu_X$, where $R_{\mathfrak{A} x}$ denotes the map $R_\mathfrak{A}$ with $x \in X$ fixed.
    Because \ref{enu: BMA J} and $\mathfrak{A}_{R_\mathfrak{A}} = \mathfrak{A}$ holds, we have $R_{\mathfrak{A} *} \mu_X \otimes \mu_X = \mu_X(X) R_{\mathfrak{A} x *} \mu_X$.
    Then fix Borel sets $W_1, W_2 \subset \mathcal{I}_\mathfrak{A}$ and $k \in \mathcal{I}_\mathfrak{A}$, and take $L^2$-approximations $f_n \to \chi_{W_1},\ g_n \to \chi_{W_2} \ (n \to \infty)$ of the characteristic functions $\chi_{W_1},\chi_{W_2}$ by continuous functions with respect to $R_{\mathfrak{A} *} \mu_X \otimes \mu_X$.
    From the Cauchy--Schwarz inequality, we have
    \begin{align}
        \MoveEqLeft
        \norm{R_\mathfrak{A}^* \chi_{W_1} \ast R_\mathfrak{A}^* \chi_{W_2} - R_\mathfrak{A}^* f_n \ast R_\mathfrak{A}^* g_n}_\infty
        \\&\le \sup_{x,z \in X} \abs{\int_y R_\mathfrak{A}^*(\chi_{W_1} - f_n)(x,y) \cdot R_\mathfrak{A}^* \chi_{W_2}(y,z) \d \mu_X}
        \\&\phantom{\le \quad} + \sup_{x,z \in X} \abs{\int_y R_\mathfrak{A}^* f_n(x,y) \cdot R_\mathfrak{A}^*(\chi_{W_2} - g_n)(y,z) \d \mu_X}
        \\&\le \sup_{x,z \in X} \ab(\int_y \abs{R_\mathfrak{A}^*(\chi_{W_1} - f_n)(x,y)}^2 \d \mu_X \cdot \int_y \abs{R_\mathfrak{A}^* \chi_{W_2}(y,z)}^2 \d \mu_X)^{1/2}
        \\&\phantom{\le \quad} + \sup_{x,z \in X} \ab(\int_y \abs{R_\mathfrak{A}^* f_n(x,y)}^2 \d \mu_X \cdot \int_y \abs{R_\mathfrak{A}^*(\chi_{W_2} - g_n)(y,z)}^2 \d \mu_X)^{1/2}
        \\&= \frac{1}{\mu_X(X)} \ab(\int \abs{\chi_{W_1} - f_n}^2 \d R_{\mathfrak{A} *} \mu_X \otimes \mu_X \cdot \int \abs{\chi_{W_2}}^2 \d R_{\mathfrak{A} *} \mu_X \otimes \mu_X)^{1/2}
        \\&\phantom{\le \quad} + \frac{1}{\mu_X(X)} \ab(\int \abs{f_n}^2 \d R_{\mathfrak{A} *} \mu_X \otimes \mu_X \cdot \int \abs{\chi_{W_2} - g_n}^2 \d R_{\mathfrak{A} *} \mu_X \otimes \mu_X)^{1/2}
        \\&\to 0 \quad (n \to \infty).
    \end{align}
    Since $\mathfrak{A}_{R_\mathfrak{A}}$ is complete, $R_\mathfrak{A}^* \chi_{W_1} \ast R_\mathfrak{A}^* \chi_{W_2} \in \mathfrak{A}_{R_\mathfrak{A}} = \mathfrak{A}$ holds.
    Thus
    \begin{equation}
        \mu_X(\set{y \in X | R_\mathfrak{A}(x,y) \in W_1,\ R_\mathfrak{A}(y,z) \in W_2})
        = R_\mathfrak{A}^* \chi_{W_1} \ast R_\mathfrak{A}^* \chi_{W_2} (x,z)
    \end{equation}
    is a constant independent of $(x,z) \in R_\mathfrak{A}^{-1}(k)$.
    Therefore, $R_\mathfrak{A}$ is a compact association scheme.

    \ref{enu: CAS commutativity}:
    It can be shown by the same approach as \ref{enu: CAS intersection number} from \ref{enu: BMA commutativity}.

    \ref{enu: CAS symmetry}:
    For all $i \in \mathcal{I}_\mathfrak{A}$ and $A \in \mathfrak{A}$, we have $i^\top(A) = i(A^\top) = i(A)$ from \ref{enu: BMA symmetry}.
    \qed
\end{proof}

By the above, the theorems are proven:
\begin{proof}[\zcref{p: CH to C*-Alg} and \zcref{p: C*-Alg to CH}]
    Since $R_{\mathfrak{A}_R}$ is equivalent to $R$ for all quotient map $R$ of $X \times X$, \zcref{p: CH to C*-Alg} is shown.
    Also, since $\mathfrak{A}_{R_\mathfrak{A}}$ is equal to $\mathfrak{A}$ for all unital C*-subalgebra $\mathfrak{A}$ of $\Mat(X)$, \zcref{p: C*-Alg to CH} is shown.
    \qed
\end{proof}

Furthermore, we obtain the following useful observations, which follow from the proofs of the theorems:

\begin{proposition}\label{p: useful observation}
    The following holds:
    \begin{enumerate}
        \item Any Bose--Mesner algebra $\mathfrak{A}$ admits a symmetric and non-negative approximate identity.
        \item For any compact association scheme $R$, the equality $R_* \mu_X \otimes \mu_X = \mu_X(X) R_{x *} \mu_X$ holds for all $x \in X$.
        \item\label{enu: easy to check CAS} To show that a quotient map $R$ of $X \times X$ is a compact association scheme, it suffices to verify that $R$ satisfies \ref{enu: CAS i_0} and that the associated algebra $\mathfrak{A}_R$ satisfies \ref{enu: BMA matrix multiplication} and \ref{enu: BMA transpose}.
    \end{enumerate}
\end{proposition}

\section{Compact strong continuous association schemes are compact association schemes}

In this section, we focus on compact strong continuous association schemes introduced by Voit \cite{Voit2019}.
Here, we do not restate the definitions; instead, we collect several facts that follow from them and from the literature, which will be used later on.

\begin{fact} \label{p: fact of VAS}
    Let $(X, \mathcal{I}, \kappa)$ be a compact strong continuous association scheme.
    Then the following hold:
    \begin{enumerate}[({VAS}1)]
        \item $X$ and $\mathcal{I}$ are second-countable compact Hausdorff spaces \cite[Def. 4.2]{Voit2019}.

        \item $\kappa$ is a map from $X \times \mathcal{I}$ to the space $\mathrm{Prob}(X)$ of all probability measures on $X$ with $((x,i) \mapsto \int \varphi \d \kappa(x,i)) \in C(X \times \mathcal{I})$ for all $\varphi \in C(X)$, that is, $\kappa$ is a continuous Markov kernel from $X \times \mathcal{I}$ to $X$ \cite[Def. 4.2]{Voit2019}.

        \item \label{enu: VAS R} The map
        \begin{align}
            R \colon X \times X &\to \mathcal{I} \\
            (x,y) &\mapsto i \text{ if } y \in \supp \kappa(x,i)
        \end{align}
        is a well-defined quotient map \cite[Def. 4.2]{Voit2019}.

        \item \label{enu: VAS i_0} There exists $i_0 \in \mathcal{I}$ such that $\kappa(x,i_0) = \delta_x$ for all $x \in X$, where $\delta_x$ is a Dirac measure on $x$ \cite[Def. 4.2]{Voit2019}.

        \item There exist a strictly positive Radon measure $\mu_X$ on $X$, called the \emph{invariant measure}; a binary operation $*$ on $\mathrm{Prob}(\mathcal{I})$, called the \emph{hypergroup convolution}; and a continuous involution $-^\top \colon \mathcal{I} \to \mathcal{I}$, called the \emph{hypergroup involution} \cite[Def. 4.2]{Voit2019}.

        \item \label{enu: HG convolution of functions} For all $f,g \in C(\mathcal{I})$, the convolution product
        \begin{equation}
            f * g(i)
            \coloneq \int_{i'} \int f \d \delta_i * \delta_{i'} \cdot g(i'^\top) \d \mu_\mathcal{I}
            \quad (i \in \mathcal{I})
        \end{equation}
        satisfies $f * g \in C(\mathcal{I})$ \cite[Prop. 1.4.9]{MR1312826}.

        \item \label{enu: VAS T1} For all $i \in \mathcal{I},\ x,z \in X$ and $f \in C(\mathcal{I})$, we have
        \begin{equation}
            \int f \d \delta_{R(x,z)} * \delta_i
            = \int_y R^* f(x,y) \d \kappa(z,i).
        \end{equation}
        See \cite[Def. 5.1]{Voit2019}.

        \item \label{enu: VAS T2} There exists a left Haar measure $\mu_\mathcal{I}$ on $\mathcal{I}$ such that
        \begin{equation}
            \int_i f(i) \cdot \int \varphi \d \kappa(x,i) \d \mu_\mathcal{I}
            = \int_y R^* f(x,y) \cdot \varphi(y) \d \mu_X
        \end{equation}
        holds for all $x \in X,\ f \in C(\mathcal{I})$ and $\varphi \in C(X)$ \cite[Def. 5.1]{Voit2019}.

        \item \label{enu: VAS transpose} $R(x,y)^\top = R(y,x)$ holds for all $x,y \in X$ \cite[Fact 4.7]{Voit2019}.
        Moreover, the \emph{transpose} $f^\top$ of a function $f$ on $\mathcal{I}$ is defined as the pullback of $f$ by $-^\top$, and the \emph{transpose} $\mu^\top$ of a measure $\mu$ on $\mathcal{I}$ is defined as the pushforward of $\mu$ by $-^\top$.

        \item \label{enu: mu_I is invariant under involution} $\mu_\mathcal{I}^\top = \mu_\mathcal{I}$ holds \cite[pp. 28, 40]{MR1312826}.

        \item \label{enu: HG anti-automorphism} $(\delta_i * \delta_{i'})^\top = \delta_{i'^\top} * \delta_{i^\top}$ holds for all $i,i' \in \mathcal{I}$ \cite[Def. 2.1, 4.2]{Voit2019}.

        \item \label{enu: VAS commutativity} We say that $(X, \mathcal{I}, \kappa)$ is \emph{commutative} if the hypergroup convolution $*$ on $\mathrm{Prob}(\mathcal{I})$ is commutative \cite[Def. 2.1, 4.2]{Voit2019}.

        \item \label{enu: VAS symmetry} We say that $(X, \mathcal{I}, \kappa)$ is \emph{symmetric} if $i^\top = i$ for all $i \in \mathcal{I}$ \cite[Def. 2.1, 4.2]{Voit2019}.
    \end{enumerate}
\end{fact}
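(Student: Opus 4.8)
The plan is to read Fact~\ref{p: fact of VAS} not as an independent result to be discovered, but as a \emph{dictionary} between Voit's hypergroup-theoretic formalism in \cite{Voit2019} and the ambient data $(X,\mu_X,\mathcal{I},R)$ that Section~2 of this paper works with; accordingly, the proof proposal is to go clause by clause, in each case either unwinding a definition from \cite{Voit2019} or quoting the structure theory of hypergroups from \cite{MR1312826}. The first move is to fix the interpretation of all symbols once and for all: $X,\mathcal{I}$ are the underlying spaces of the given compact strong continuous association scheme, $\mathcal{I}$ carrying the quotient topology induced by $R$; $\mu_X$ is the invariant measure furnished by \cite[Def.~4.2]{Voit2019} (so that the standing hypothesis of Section~2 that $\mu_X$ is a strictly positive Radon measure is exactly met); $*$ and $-^\top$ are the hypergroup convolution on $\mathrm{Prob}(\mathcal{I})$ and the hypergroup involution on $\mathcal{I}$; and $\mu_{\mathcal{I}}$ is a left Haar measure on the compact hypergroup $(\mathcal{I},*)$. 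With this dictionary set, clauses (VAS1)--(VAS5), (VAS7), (VAS8), (VAS12), (VAS13) are literal transcriptions of \cite[Def.~4.2 and Def.~5.1]{Voit2019}, where (VAS7) and (VAS8) (``T1'' and ``T2'') are precisely the two extra axioms that upgrade a continuous association scheme to a \emph{strong} one, and (VAS9) is \cite[Fact~4.7]{Voit2019}; for these the only task is to confirm that nothing has been renormalized in transit.

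The second move is to isolate the two statements that are genuinely analytic rather than definitional. Clause (VAS2), that $(x,i)\mapsto \int\varphi\,\d\kappa(x,i)$ lies in $C(X\times\mathcal{I})$ for every $\varphi\in C(X)$, is again part of \cite[Def.~4.2]{Voit2019}; clause (VAS6), that $f*g\in C(\mathcal{I})$ whenever $f,g\in C(\mathcal{I})$, is the continuity of convolution on a compact hypergroup, \cite[Prop.~1.4.9]{MR1312826}. Here I would spell out one compatibility check: the displayed formula $f*g(i)=\int_{i'}\bigl(\int f\,\d\delta_i*\delta_{i'}\bigr)g(i'^\top)\,\d\mu_{\mathcal{I}}$ must be matched against the convolution of functions used in \cite{MR1312826}; the factor $g(i'^\top)$ together with the involution-invariance of $\mu_{\mathcal{I}}$ (clause (VAS10)) is exactly what reconciles the left-Haar normalization of \cite{MR1312826} with the Weil-type identity pinning down $\mu_{\mathcal{I}}$ in (VAS8). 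Second countability from (VAS1) is what lets one stay inside the metrizable setting in which \cite{MR1312826} is most conveniently invoked.

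The third move handles the two remaining algebraic identities, which come from general hypergroup theory rather than from \cite{Voit2019} directly: (VAS11), $(\delta_i*\delta_{i'})^\top=\delta_{i'^\top}*\delta_{i^\top}$, is the statement that $-^\top$ is an anti-automorphism of $*$, built into the hypergroup axioms \cite[Def.~2.1]{Voit2019}; and (VAS10), $\mu_{\mathcal{I}}^\top=\mu_{\mathcal{I}}$, is the fact that the left Haar measure of a compact hypergroup is invariant under the involution, \cite[pp.~28, 40]{MR1312826} (the modular function being identically $1$ in the compact case). I expect the main --- and essentially the only --- obstacle to be bookkeeping: \cite{Voit2019} and \cite{MR1312826} do not fix the Haar measure by the same normalization (Voit pins $\mu_{\mathcal{I}}$ down through T2, whereas \cite{MR1312826} fixes it only up to a positive scalar), so one must verify that the constant implicit in (VAS8) is the one for which the unnormalized convolution of (VAS6) and the involution-invariance of (VAS10) simultaneously hold; once these normalizations are reconciled, every clause of the statement is immediate.
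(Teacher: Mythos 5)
Your clause-by-clause reading matches exactly what the paper does: the Fact is stated with no separate proof, each item being justified only by the inline citations to \cite[Def.~2.1, 4.2, 5.1, Fact~4.7]{Voit2019} and \cite[Prop.~1.4.9, pp.~28, 40]{MR1312826}, which is precisely the dictionary you assemble. Your extra care about reconciling the Haar-measure normalization in (VAS8) with \cite{MR1312826} is a reasonable refinement but goes beyond anything the paper spells out; otherwise the approaches coincide.
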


Now we prove that any compact strong continuous association scheme is a compact association scheme:

\begin{theorem}
    Let $(X, \mathcal{I}, \kappa)$ be a compact strong continuous association scheme.
    Then the quotient map $R$ obtained from \ref{enu: VAS R} is a compact association scheme.
    Moreover, if $(X, \mathcal{I}, \kappa)$ is commutative [resp.\ symmetric], then $R$ is also commutative [resp.\ symmetric].
\end{theorem}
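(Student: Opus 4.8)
The plan is to verify the axioms \ref{enu: CAS i_0}, \ref{enu: CAS intersection number}, \ref{enu: CAS transpose} directly from the facts gathered above, and then to read off commutativity and symmetry; equivalently, one could check that $\mathfrak{A}_R$ is a Bose--Mesner algebra and invoke \zcref{p: CH to C*-Alg}, but the direct route is shorter here.

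A useful preliminary is that $R_{x *}\mu_X = \mu_\mathcal{I}$ for every $x \in X$, where $R_x \coloneq R(x,\cdot)$: applying \ref{enu: VAS T2} with $\varphi \equiv 1$ and using that each $\kappa(x,i)$ is a probability measure gives $\int f \d\mu_\mathcal{I} = \int_y R^*f(x,y) \d\mu_X = \int f \d(R_{x *}\mu_X)$ for all $f \in C(\mathcal{I})$, so these two Radon measures on $\mathcal{I}$ coincide. Axioms \ref{enu: CAS i_0} and \ref{enu: CAS transpose} are then immediate: by \ref{enu: VAS R} and \ref{enu: VAS i_0} we have $R(x,y) = i_0 \iff y \in \supp\kappa(x,i_0) = \set{x}$, hence $R^{-1}(i_0) = \Delta_X$; and taking $i^\top$ to be the hypergroup involution of $i$, \ref{enu: VAS transpose} yields $(x,y) \in R^{-1}(i^\top) \iff R(y,x) = i \iff (x,y) \in R^{-1}(i)^\top$.

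The main point is \ref{enu: CAS intersection number}. First I would show that $\mathfrak{A}_R$ is closed under $\circ$ by proving, for $f,g \in C(\mathcal{I})$ and $x,z \in X$, the identity
\begin{align}
    R^*f \circ R^*g(x,z)
    &= \int_y R^*f(x,y) \cdot R^*g(y,z) \d\mu_X \\
    &= (g^\top * f^\top)\bigl(R(x,z)^\top\bigr),
\end{align}
where $*$ is the convolution of functions from \ref{enu: HG convolution of functions}. The derivation inserts into \ref{enu: VAS T2} the test function $y \mapsto R^*g(y,z) = R^*(g^\top)(z,y)$ (which is legitimate by \ref{enu: VAS transpose}), then applies \ref{enu: VAS T1}, and finally recognizes the outcome as a convolution evaluated at $R(x,z)^\top$ after one more use of \ref{enu: VAS transpose}. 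In particular $R^*f \circ R^*g$ depends on $(x,z)$ only through $R(x,z)$, hence factors as $R^*h$ for a (necessarily continuous) $h \in C(\mathcal{I})$ since $R$ is a quotient map, so $\mathfrak{A}_R$ is closed under $\circ$. For arbitrary Borel sets $W, W' \subset \mathcal{I}$ I would then choose $f_n \to \chi_W$ and $g_n \to \chi_{W'}$ in $L^2(\mathcal{I},\mu_\mathcal{I})$ with $f_n, g_n \in C(\mathcal{I})$ and rerun the Cauchy--Schwarz estimate already used above for $R_\mathfrak{A}$: by the preliminary, $\int_y \abs{R^*(\chi_W - f_n)(x,y)}^2 \d\mu_X = \norm{\chi_W - f_n}_{L^2(\mu_\mathcal{I})}^2$, and the analogous bound for the second slot follows once \ref{enu: mu_I is invariant under involution} is used to absorb a transpose, so $R^*f_n \circ R^*g_n \to R^*\chi_W \circ R^*\chi_{W'}$ uniformly. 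Since $\mathfrak{A}_R$ is norm-closed, the limit lies in $\mathfrak{A}_R$; that is, $\mu_X(\set{y | R(x,y) \in W,\ R(y,z) \in W'}) = R^*\chi_W \circ R^*\chi_{W'}(x,z)$ is constant on $R^{-1}(k)$, and this nonnegative constant is the required $p_{W,W'}^k$.

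Finally, if $(X,\mathcal{I},\kappa)$ is commutative then the convolution of functions on $C(\mathcal{I})$ is commutative \cite{MR1312826}, so the displayed identity gives $R^*f \circ R^*g = R^*g \circ R^*f$ for all $f,g \in C(\mathcal{I})$, and the same approximation yields $p_{W,W'}^k = p_{W',W}^k$; if $(X,\mathcal{I},\kappa)$ is symmetric then the hypergroup involution---identified with the map $i \mapsto i^\top$ in the verification of \ref{enu: CAS transpose}---is the identity, so \ref{enu: CAS symmetry} holds. I expect the displayed identity for $R^*f \circ R^*g$ to be the one genuinely delicate step: it turns on choosing the right test functions for \ref{enu: VAS T1} and \ref{enu: VAS T2} and on keeping careful track of the involution; everything else is routine or a verbatim repetition of arguments already given above.
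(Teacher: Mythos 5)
Your proposal is correct. It differs from the paper's route mainly in organization: the paper proves the theorem by showing that $\mathfrak{A}_R$ is a Bose--Mesner algebra---\ref{enu: BMA matrix multiplication} via the identity $R^* f \circ R^* g = R^*(f*g)$, \ref{enu: BMA transpose} from \ref{enu: VAS transpose}, and \ref{enu: BMA I} by rerunning the approximate-identity construction of \zcref{p: CAS to BMA}---and then invokes \zcref{p: CH to C*-Alg}, whereas you verify \ref{enu: CAS i_0}--\ref{enu: CAS transpose} directly and never need the approximate identity. Your key computation is in substance the same as the paper's: your $(g^\top * f^\top)\bigl(R(x,z)^\top\bigr)$ equals the paper's $R^*(f*g)(x,z)$, the only difference being which slot of \ref{enu: VAS T2} receives the test function, so $R^*f \circ R^*g \in \mathfrak{A}_R$ either way. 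For the passage to Borel sets you inline the Cauchy--Schwarz approximation argument that the paper carries out in its lemma from Bose--Mesner algebras to compact association schemes, but with a cleaner normalization: you extract $R_{x*}\mu_X = \mu_\mathcal{I}$ directly from \ref{enu: VAS T2} with $\varphi \equiv 1$, instead of the paper's identity $R_*\mu_X \otimes \mu_X = \mu_X(X)\, R_{x*}\mu_X$ derived from \ref{enu: BMA J}. For commutativity the paper gives a short direct computation from \ref{enu: mu_I is invariant under involution}--\ref{enu: VAS commutativity}, while you cite commutativity of the convolution of functions on a commutative hypergroup from Bloom--Heyer; both are legitimate (and your displayed identity plus \ref{enu: mu_I is invariant under involution} and \ref{enu: HG anti-automorphism} would also reproduce the paper's computation if you preferred to stay self-contained). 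What your route buys is brevity---no approximate identity and no appeal to the equivalence theorem; what the paper's route buys is the Bose--Mesner algebra statement itself, which fits the general correspondence established in Section 2.
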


\begin{proof}
    We show that $R$ satisfies \ref{enu: CAS i_0} and that the associated algebra $\mathfrak{A}_R$ satisfies \ref{enu: BMA matrix multiplication} and \ref{enu: BMA transpose}.

    \ref{enu: CAS i_0}:
    It follows from \ref{enu: VAS i_0}.

    \ref{enu: BMA matrix multiplication}:
    Fix $f,g \in C(\mathcal{I})$.
    From \ref{enu: VAS T1}--\ref{enu: VAS transpose}, for all $x,z \in X$, we have
    \begin{align}
        R^* f \ast R^* g(x,z)
        &= \int_y R^* f(x,y) \cdot R^* g^\top(z,y) \d \mu_X
        \\&= \int_i \int_y R^* f(x,y) \d \kappa(z,i) \cdot g^\top(i) \d \mu_\mathcal{I}
        \\&= \int_i \int f \d \delta_{R(x,z)} * \delta_i \cdot g^\top(i) \d \mu_\mathcal{I}
        \\&= R^*(f * g)(x,z).
    \end{align}
    Thus, by \ref{enu: HG convolution of functions}, we have $R^* f \ast R^* g = R^*(f * g) \in \mathfrak{A}_R$.

    \ref{enu: BMA transpose}:
    It is clear from \ref{enu: VAS transpose}.

    Hence, by \zcref{p: useful observation} (\ref{enu: easy to check CAS}), $R$ is a compact association scheme.

    \ref{enu: BMA commutativity}:
    For all $f,g \in C(\mathcal{I})$ and $x,z \in X$, from \ref{enu: mu_I is invariant under involution}--\ref{enu: VAS commutativity}, we get
    \begin{align}
        R^* f \ast R^* g (x,z)
        &= R^* g^\top \ast R^* f^\top (z,x)
        \\&= \int_i \int g^\top \d \delta_{R(z,x)} * \delta_i \cdot f(i) \d \mu_\mathcal{I}
        \\&= \int_i \int g \d \delta_i * \delta_{R(x,z)} \cdot f^\top(i) \d \mu_\mathcal{I}
        \\&= \int_i \int g \d \delta_{R(x,z)} * \delta_i \cdot f^\top(i) \d \mu_\mathcal{I}
        \\&= R^* g \ast R^* f(x,z).
    \end{align}

    \ref{enu: BMA symmetry}:
    It is trivial from \ref{enu: VAS symmetry}.
    \qed
\end{proof}

\section{Acknowledgments}

The author would like to express his sincere gratitude to Takayuki Okuda and Kento Ogawa, as well as to Eiichi Bannai, Hirotake Kurihara, Makoto Matsumoto, Takuya Saito, and Takashi Satomi, for their many valuable discussions and helpful comments.


\begin{thebibliography}{10}

\bibitem{MR2033722}
C.~Bachoc, E.~Bannai, and R.~Coulangeon.
\newblock Codes and designs in {G}rassmannian spaces.
\newblock {\em Discrete Math.}, 277(1-3):15--28, 2004.

\bibitem{MR1941981}
C.~Bachoc, R.~Coulangeon, and G.~Nebe.
\newblock Designs in {G}rassmannian spaces and lattices.
\newblock {\em J. Algebraic Combin.}, 16(1):5--19, 2002.

\bibitem{bannai2021algebraic}
E.~Bannai, E.~Bannai, T.~Ito, and R.~Tanaka.
\newblock {\em Algebraic combinatorics}, volume~5.
\newblock Walter de Gruyter GmbH \& Co KG, 2021.

\bibitem{MR796472}
E.~Bannai and S.~G. Hoggar.
\newblock On tight {$t$}-designs in compact symmetric spaces of rank one.
\newblock {\em Proc. Japan Acad. Ser. A Math. Sci.}, 61(3):78--82, 1985.

\bibitem{MR3366838}
A.~Barg and M.~Skriganov.
\newblock Association schemes on general measure spaces and zero-dimensional {A}belian groups.
\newblock {\em Adv. Math.}, 281:142--247, 2015.

\bibitem{MR1312826}
W.~R. Bloom and H.~Heyer.
\newblock {\em Harmonic analysis of probability measures on hypergroups}, volume~20 of {\em De Gruyter Studies in Mathematics}.
\newblock Walter de Gruyter \& Co., Berlin, 1995.

\bibitem{MR4780765}
B.-Y. Chen.
\newblock Designs in compact symmetric spaces and applications of great antipodal sets.
\newblock {\em Rom. J. Math. Comput. Sci.}, 14(1):54--63, 2024.

\bibitem{MR384310}
P.~Delsarte.
\newblock An algebraic approach to the association schemes of coding theory.
\newblock {\em Philips Res. Rep. Suppl.}, (10):vi+97, 1973.

\bibitem{MR485471}
P.~Delsarte, J.~M. Goethals, and J.~J. Seidel.
\newblock Spherical codes and designs.
\newblock {\em Geometriae Dedicata}, 6(3):363--388, 1977.

\bibitem{MR679208}
S.~G. Hoggar.
\newblock {$t$}-designs in projective spaces.
\newblock {\em European J. Combin.}, 3(3):233--254, 1982.

\bibitem{MR4331068}
H.~Kurihara.
\newblock Antipodal sets and designs on unitary groups.
\newblock {\em Graphs Combin.}, 37(5):1559--1583, 2021.

\bibitem{MR4097913}
H.~Kurihara and T.~Okuda.
\newblock Great antipodal sets on complex {G}rassmannian manifolds as designs with the smallest cardinalities.
\newblock {\em J. Algebra}, 559:432--466, 2020.

\bibitem{MatsumotoOgawaOkuda2022}
M.~Matsumoto, K.~Ogawa, and T.~Okuda.
\newblock Functoriality of {B}ose-{M}esner algebras and profinite association schemes.
\newblock arXiv:2207.07851.

\bibitem{MR1074574}
G.~J. Murphy.
\newblock {\em {$C^*$}-algebras and operator theory}.
\newblock Academic Press, Inc., Boston, MA, 1990.

\bibitem{combcon}
A.~Neumaier.
\newblock {\em Combinatorial configurations in terms of distances}.
\newblock Eindhoven University of Technology : Dept of Mathematics : memorandum. Technische Hogeschool Eindhoven, 1981.
\newblock Notes of a course given in October and November 1980, prepared by A. Blokhuis and R. Klerx.

\bibitem{MR2529619}
A.~Roy and A.~J. Scott.
\newblock Unitary designs and codes.
\newblock {\em Des. Codes Cryptogr.}, 53(1):13--31, 2009.

\bibitem{Voit2019}
M.~Voit.
\newblock Continuous association schemes and hypergroups.
\newblock {\em J. Aust. Math. Soc.}, 106(3):361--426, 2019.

\bibitem{MR2328043}
J.~A. Wolf.
\newblock {\em Harmonic analysis on commutative spaces}, volume 142 of {\em Mathematical Surveys and Monographs}.
\newblock American Mathematical Society, Providence, RI, 2007.

\bibitem{MR2184345}
P.-H. Zieschang.
\newblock {\em Theory of association schemes}.
\newblock Springer Monographs in Mathematics. Springer-Verlag, Berlin, 2005.

\end{thebibliography}
\end{document}